\newtheorem{theorem}{Theorem}[section]
\newtheorem{proposition}[theorem]{Proposition}
\theoremstyle{definition}
\newtheorem{example}[theorem]{Example}
\numberwithin{equation}{section}
\newcommand{\vp}{\varphi}
\newcommand{\clh}{\mathcal{H}}
\newcommand{\clk}{\mathcal{K}}
\newcommand{\clm}{\mathcal{M}}
\newcommand{\D}{\mathbb{D}}
\newcommand{\T}{\mathbb{T}}
\newcommand{\Z}{\mathbb{Z}}
\newcommand{\raro}{\rightarrow}
\newcommand{\br}{B_{\sigma,e^{i\theta} }}
\newcommand{\brst}{B_{\sigma,e^{i\theta} }^*}
\newcommand{\C}{\mathbb{C}}
\begin{document}

%\today

\setcounter{page}{1}

%\today

\title[Invariant subspaces and $C_{00}$-property]{Invariant subspaces and the $C_{00}$-property of Brownian shifts}

\author[Das]{Nilanjan Das}
\address{Indian Statistical Institute, Statistics and Mathematics Unit, 8th Mile, Mysore Road, Bangalore, 560059,
India}
\email{nilanjand7@gmail.com}

\author[Das]{Soma Das}
\address{Indian Statistical Institute, Statistics and Mathematics Unit, 8th Mile, Mysore Road, Bangalore, 560059, India}
\email{dsoma994@gmail.com}

\author[Sarkar]{Jaydeb Sarkar}
\address{Indian Statistical Institute, Statistics and Mathematics Unit, 8th Mile, Mysore Road, Bangalore, 560059,
India}
\email{jay@isibang.ac.in, jaydeb@gmail.com}

%\today

\subjclass[2020]{46J15, 30H10, 30J05, 60J65, 47B35}

\keywords{Invariant subspaces, Brownian shift, model spaces, inner functions, Hardy spaces, perturbations}

\begin{abstract}
We consider the restriction of Brownian shifts to their invariant subspaces and classify when they are unitarily equivalent. Additionally, we prove an asymptotic property stating that normalized Brownian shifts belong to the classical $C_{00}$-class.
\end{abstract}

\maketitle

\tableofcontents

\section{Introduction}\label{sec intro}

Let $\sigma > 0$ and $\theta\in [0, 2\pi)$. The \textit{Brownian shift} of covariance $\sigma$ and angle $\theta$ is the bounded linear operator $\br:H^2\oplus \mathbb{C}\to H^2\oplus\mathbb{C}$ defined by
\[
\br = \begin{bmatrix}
S & \sigma (1\otimes 1)\\
0 & e^{i\theta}
\end{bmatrix},
\]
where $S: H^2 \raro H^2$ is the shift operator on $H^2$ and $H^2$ denotes the Hilbert space of all analytic functions in the open unit disc $\D = \{z \in \mathbb{C}: |z| < 1\}$ whose power series coefficients are square summable. Equivalently, an analytic function $f$ on $\D$ is in $H^2$ if and only if \cite{Gar}
\[
\|f\|_2 := \left(\sup_{r \in (0,1)}\left[\frac{1}{2 \pi}  \int_{0}^{2 \pi} |f(r e^{i \theta})|^2 d\theta\right] \right)^{\frac{1}{2}} < \infty.
\]
Recall that $S$ is the multiplication operator by the coordinate function $z$, that is,
\[
S f = z f,
\]
for all $f \in H^2$. Moreover, the operator $1 \otimes 1 : \mathbb{C} \raro H^2$ is defined by $((1\otimes 1)\alpha)(z) = \alpha$ for all $\alpha \in \mathbb{C}$ and $z \in \D$. Brownian shifts were introduced by Agler and Stankus in the context of $m$-isometries \cite[Definition 5.5]{Agler-Stankus}. These operators are related to the time-shift operators associated with Brownian motion processes.

Determining the lattice of closed subspaces that are invariant under a given bounded linear operator is always an interesting problem. When the underlying operator is simple—particularly for naturally occurring operators—the problem becomes even more intriguing. In the case of the Brownian shifts, Agler and Stankus resolved the invariant subspace problem \cite{Agler-Stankus}. They proved that for a nonzero closed subspace $\clm$ of $H^2 \oplus \mathbb{C}$, $\clm$ is invariant under $\br$ if and only if it admits one of the following representations:
\[
\clm = \psi H^2 \oplus \{0\},
\]
for some inner function $\psi \in H^\infty$, or
\[
\clm = \mathbb{C}\begin{bmatrix} g\\ 1\end{bmatrix} \oplus \left(\varphi H^2\oplus\{0\}\right),
\]
where $\vp \in H^\infty$ is an inner function with the condition that $\vp(e^{i\theta})$ exists, and
\[
g(z) = \sigma \left(\frac{\overline{\vp(e^{i\theta})} \varphi - 1}{z-e^{i\theta}}\right) \qquad (z \in \D).
\]
Following Agler and Stankus, we call the former invariant subspace \textit{Type I} and the latter one \textit{Type II}. We also call them the \textit{canonical representations} of the invariant subspaces of $\br$. Here, $H^\infty$ denotes the space of all bounded analytic functions on $\D$. Recall that a function $g \in H^\infty$ is called \textit{inner} if $|g(z)|=1$ almost everywhere on $\mathbb{T}$, in the sense of radial limits \cite{Gar}.

Therefore, the lattice of invariant subspaces of a given Brownian shift operator is parameterized by inner functions in the Type I case, and in the Type II case, by inner functions satisfying a constraint on the existence of a boundary value at $\mathbb{T}$ corresponding to the given angle of the Brownian shift.

In this paper, we take the Agler-Stankus invariant subspace result to the next natural step. Specifically, given a pair of closed subspaces $\clm_1$ and $\clm_2$ of $H^2 \oplus \mathbb{C}$ that are invariant under the Brownian shifts $B_{\sigma_1, e^{i\theta_1}}$ and $B_{\sigma_2, e^{i\theta_2}}$, respectively, we consider the restriction operators $B_{\sigma_1, e^{i\theta_1}}|_{\clm_1}$ and $B_{\sigma_2, e^{i\theta_2}}|_{\clm_2}$ on $\clm_1$ and $\clm_2$, respectively, and determine when they are unitarily equivalent. One of the main results of this paper states the following (see Theorem \ref{thm: unit equiv}): There exists a unitary operator $U : \clm_1 \raro \clm_2$ such that
\[
U B_{\sigma_1, e^{i\theta_1}}|_{\clm_1} = B_{\sigma_2, e^{i\theta_2}}|_{\clm_2} U,
\]
if and only if either (i) $\clm_1$ and $\clm_2$ are Type I, or (ii) $\clm_1$ and $\clm_2$ are Type II and
\[
\theta_1=\theta_2,
\]
and
\[
\frac{\sigma_1}{\sigma_2} = \sqrt{\frac{1+\|g_1\|^2}{1+\|g_2\|^2}},
\]
where
\[
\clm_j = \mathbb{C}\begin{bmatrix} g_j\\ 1\end{bmatrix} \oplus \left(\varphi_j H^2\oplus\{0\}\right),
\]
is the canonical representation of $\clm_j$, $j=1,2$.

Given such a pair of invariant subspaces $\clm_1$ and $\clm_2$ as above, we say that $\clm_1$ and $\clm_2$ are \textit{unitarily equivalent} if there exists a unitary operator $U$ satisfying the above intertwining relation. Similarly, one can define unitarily equivalent invariant subspaces of the shift operator on the Hardy space, the Bergman space, the Dirichlet space, and many more. Invariant subspaces of the shift on $H^2$ are always unitarily equivalent, whereas they are never unitarily equivalent for the Bergman or Dirichlet spaces \cite{Richter}. From this perspective, the Brownian shift exhibits a mixture of invariant subspaces—a property that is highly distinctive compared to other classical operators (see the examples in Section \ref{sec: examp}). Additionally, we will soon see in \eqref{eqn: pert} that these operators are rank-one perturbations of an isometry. We will further comment on these observations at the end of this paper.

Now we turn to an asymptotic property of Brownian shifts. Let $T$ be a contraction on a Hilbert space $\clh$. We say that $T$ is pure, denoted by $T \in C_{\cdot 0}$, if
\[
SOT-\lim_{m \raro \infty} T^{*m}= 0.
\]
Furthermore, we say that $T$ satisfies the $C_{00}$-property, which we simply write as $T \in C_{00}$, if both $T$ and $T^*$ are pure.

Operators in class $C_{\cdot 0}$ or $C_{0 0}$ are of interest. The asymptotic property is often useful in representing these operators \cite{NaFo70}. Examples of operators in $C_{0 0}$ include strict contractions. Moreover, any operator can be scaled by a scalar so that the resulting operator belongs to $C_{0 0}$. However, scaling an operator is not always a desirable method for revealing its structure. In our present context, we first prove that Brownian shifts are not power-bounded and, hence, in particular, are not even similar to contractions. Next, we highlight a peculiar property of Brownian shifts: namely, we prove in Theorem \ref{Brown_C00} that for any covariance $\sigma > 0$ and angle $\theta \in [0, 2 \pi)$, the normalized operator
\[
\frac{1}{\|\br\|} \br \in C_{00}.
\]

There are many reasons to study Brownian shifts, as also pointed out by Agler and Stankus in their paper \cite{Agler-Stankus}. For instance, Brownian shifts play a crucial role in understanding the structure of $2$-isometries, a notion introduced by Agler decades ago (cf. \cite{Bill}). We refer the reader to \cite{Agler-Stankus} for representations of $2$-isometries and to \cite{Jan} for some recent developments. In addition, we highlight that a Brownian shift can be thought of as a rank-one perturbation of an isometry. Indeed:
\begin{equation}\label{eqn: pert}
\br = B_s + R,
\end{equation}
where
\[
B_s =\begin{bmatrix}S & 0\\0 & 1\end{bmatrix},
\]
is an isometry, and
\[
R = \begin{bmatrix}0 & \sigma (1\otimes 1)\\0 & e^{i\theta}-1\end{bmatrix},
\]
is a rank one operator on $H^2 \oplus \mathbb{C}$. The theory of perturbed operators and their invariant subspaces is certainly of interest. From this perspective, the result of Agler and Stankus on the invariant subspaces of Brownian shifts is particularly notable. Subsequently, the present work aims to shed new light on the general theory of operators and functions for Brownian shifts.

The paper is organized as follows. In Section \ref{sec: Unit equiv}, we present a complete description of unitarily equivalent invariant subspaces of Brownian shifts. In Section \ref{sec: power bdd}, we prove that normalized Brownian shifts are always in $C_{00}$. In the final section, Section \ref{sec: examp}, we illustrate our results with some concrete examples and point out that Brownian shifts are irreducible.

\section{Unitary equivalence}\label{sec: Unit equiv}

Recall that the nonzero invariant subspaces of $S$ in $H^2$ are given by $\vp H^2$, where $\vp$ runs over all inner functions from $H^\infty$ \cite{Beurling}. Given a pair of $S$-invariant subspaces $\vp_1 H^2$ and $\vp_2 H^2$ for some inner functions $\vp_1, \vp_2 \in H^\infty$, we consider the restrictions $S|_{\vp_1 H^2}$ and $S|_{\vp_2 H^2}$ of $S$ on $\vp_1 H^2$ and $\vp_2 H^2$, respectively. It is now easy to see that there exists a unitary operator $U: \vp_1 H^2 \raro \vp_2 H^2$ such that
\[
U S|_{\vp_1 H^2} = S|_{\vp_2 H^2} U.
\]
In short, we denote such a unitary equivalence property as
\[
S|_{\vp_1 H^2} \cong S|_{\vp_2 H^2}.
\]
Therefore, as far as operators are concerned, restrictions of $S$ on its invariant subspaces do not yield anything new. This prompts the question of distinguishing the restrictions of Brownian shifts on their invariant subspaces. We remind the reader that the invariant subspaces of Brownian shifts are also described by inner functions. For Type II invariant subspaces, these inner functions must additionally attain values at points on the circle corresponding to the given angles of the associated Brownian shifts. Also, recall that given a Type II invariant subspace $\clm$ of a Brownian shift, the canonical representation of $\clm$, as given in the introductory section (Section \ref{sec intro}), implies that the function $g$ is in $\clk_\vp$, where
\[
\clk_\vp = H^2 \ominus \vp H^2,
\]
is the \textit{model space} \cite{NaFo70}. This yields the useful relation
\[
g \perp \vp H^2.
\]
We now investigate the unitary equivalence of the invariant subspaces of Brownian shifts.

\begin{theorem}\label{thm: unit equiv}
Fix angles $\theta_1, \theta_2 \in [0, 2 \pi)$ and covariances $\sigma_1, \sigma_2 > 0$. Let $\clm_1$ and $\clm_2$ be nonzero closed invariant subspaces of the Brownian shifts $B_{\sigma_1, e^{i\theta_1}}$ and $B_{\sigma_2, e^{i\theta_2}}$, respectively. Then
\[
B_{\sigma_1, e^{i\theta_1}}{\big|_{\clm_1}} \cong B_{\sigma_2, e^{i\theta_2}}{\big|_{\clm_2}},
\]
if and only if any one of the following conditions is true:
\begin{itemize}
\item[(1)]
Both $\clm_1$ and $\clm_2$ are Type I.
\item[(2)]
Both $\clm_1$ and $\clm_2$ are Type II, along with the facts that
\[
\theta_1 = \theta_2,
\]
and
\[
\sigma_2^2(1+\|g_1\|^2)=\sigma_1^2(1+\|g_2\|^2),
\]
where $\clm_j = \mathbb{C}\begin{bmatrix}g_j\\ 1\end{bmatrix} \oplus \left(\varphi_j H^2\oplus\{0\}\right)$ is the canonical representation of $\clm_j$, and $g_j = \sigma_j\left(\frac{\overline{\vp_j(e^{i\theta_j})} \vp_j - 1}{z - e^{i\theta_j}}\right)$ for $j=1,2$.
\end{itemize}
\end{theorem}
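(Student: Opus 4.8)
The plan is to reduce the whole statement to a clean classification of the ``abstract'' Brownian shifts $B_{\tau,\lmd}$ and to separate the two types by an isometry obstruction. First I would dispose of Type I: if $\clm = \psi H^2 \oplus \{0\}$, a direct computation shows that $B_{\sigma, e^{i\theta}}{\big|_{\clm}}$ is just multiplication by $z$ on $\psi H^2$, and since $\{z^n\psi : n \geq 0\}$ is an orthonormal basis of $\psi H^2$, this restriction is a unilateral shift of multiplicity one, \emph{independent} of $\sigma$, $\theta$, and $\psi$. Hence any two Type I restrictions are unitarily equivalent, which gives the ``if'' part of (1); in particular every Type I restriction is an isometry.

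The heart of the argument is to show that every Type II restriction is itself unitarily equivalent to a genuine Brownian shift. Writing $\lmd = e^{i\theta}$, $e_0 = \begin{bmatrix} g \\ 1\end{bmatrix}$, and using $g = \sigma\tfrac{\overline{\vp(\lmd)}\vp - 1}{z - \lmd}$, the key identity $(z-\lmd)g + \sigma = \sigma\,\overline{\vp(\lmd)}\,\vp$ yields
\[
B_{\sigma, e^{i\theta}} e_0 = \lmd e_0 + \sigma\,\overline{\vp(\lmd)}\begin{bmatrix}\vp \\ 0\end{bmatrix},
\]
while on $\vp H^2 \oplus \{0\}$ the operator acts again as multiplication by $z$. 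Because $g \in \clk_\vp$ we have $e_0 \perp \vp H^2 \oplus \{0\}$, and because $\vp(e^{i\theta})$ exists with $g \in H^2$ one checks $|\vp(e^{i\theta})| = 1$. Normalizing $e_0$ and absorbing the unimodular phase $\overline{\vp(\lmd)}$ into that basis vector, I obtain, in the orthonormal basis $\{e_0/\|e_0\|,\ \vp,\ z\vp,\ z^2\vp, \dots\}$, exactly the matrix of the Brownian shift $B_{\tau, e^{i\theta}}$ with
\[
\tau = \frac{\sigma}{\sqrt{1 + \|g\|^2}},
\]
so that $B_{\sigma, e^{i\theta}}{\big|_{\clm}} \cong B_{\tau, e^{i\theta}}$. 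I expect this to be the main obstacle, since it requires both the algebraic identity and careful phase bookkeeping to land precisely on a Brownian shift rather than merely a weighted-shift perturbation.

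Next I would classify the abstract Brownian shifts $B_{\tau,\lmd}$ up to unitary equivalence by extracting two invariants. A short computation gives $B_{\tau,\lmd}^*B_{\tau,\lmd} - I = \tau^2 P$, where $P$ is the rank-one orthogonal projection onto $\C\begin{bmatrix}0\\1\end{bmatrix}$; hence $\|B_{\tau,\lmd}^*B_{\tau,\lmd} - I\| = \tau^2$ recovers $\tau$, and simultaneously shows that a Type II restriction is \emph{not} an isometry (as $\tau > 0$), ruling out unitary equivalence between Type I and Type II. Moreover $B_{\tau,\lmd}^*\begin{bmatrix}0\\1\end{bmatrix} = \overline{\lmd}\begin{bmatrix}0\\1\end{bmatrix}$, and since the range of $B_{\tau,\lmd}^*B_{\tau,\lmd} - I$ is exactly $\C\begin{bmatrix}0\\1\end{bmatrix}$, any unitary intertwiner must carry this distinguished eigenvector of the adjoint to the corresponding one, forcing $\lmd_1 = \lmd_2$. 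Therefore $B_{\tau_1,\lmd_1} \cong B_{\tau_2,\lmd_2}$ if and only if $\tau_1 = \tau_2$ and $\lmd_1 = \lmd_2$.

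Finally I would assemble the pieces. Type I and Type II can never be equivalent, by the isometry-versus-non-isometry distinction above, so an equivalence forces the two subspaces to share a type. For two Type I subspaces this already holds unconditionally, giving (1). For two Type II subspaces, the reduction gives $B_{\sigma_j, e^{i\theta_j}}{\big|_{\clm_j}} \cong B_{\tau_j, e^{i\theta_j}}$ with $\tau_j = \sigma_j/\sqrt{1 + \|g_j\|^2}$, so equivalence holds iff $\theta_1 = \theta_2$ and $\tau_1 = \tau_2$; squaring and clearing denominators rewrites $\tau_1 = \tau_2$ as $\sigma_2^2(1 + \|g_1\|^2) = \sigma_1^2(1 + \|g_2\|^2)$, which is precisely condition (2).
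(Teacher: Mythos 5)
Your proposal is correct, but it takes a genuinely different route from the paper's proof. The paper argues directly on the invariant subspaces: for the ``if'' direction it writes down explicit intertwining unitaries between $\clm_1$ and $\clm_2$, and for the ``only if'' direction it runs a vector-by-vector norm chase --- expanding $U\begin{bmatrix} g_1 \\ 1\end{bmatrix} = \begin{bmatrix} \vp_2 h_2 \\ 0\end{bmatrix} + \alpha \begin{bmatrix} g_2 \\ 1\end{bmatrix}$, showing $h_1 = h_2 = 0$ and $\beta = 0$, and finishing with an inner-product identity to force $\theta_1 = \theta_2$ --- separating the two types by comparing operator norms ($1$ versus $\sqrt{1+\sigma^2}$-type bounds). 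The normalization $B_{\sigma, e^{i\theta}}\big|_{\clm} \cong B_{\sigma/\sqrt{1+\|g\|^2},\, e^{i\theta}}$ (Agler--Stankus, Proposition 5.75) appears in the paper only \emph{after} the theorem, as a corollary obtained by setting $\vp_2 = 1$, $g_2 = 0$. You invert that logical order: you prove the normalization first, via the orthonormal-basis/matrix computation with the phase $\overline{\vp(e^{i\theta})}$ absorbed, and then reduce everything to classifying the abstract operators $B_{\tau,\lambda}$, which you do with genuine unitary invariants --- the defect identity $B_{\tau,\lambda}^*B_{\tau,\lambda} - I = \tau^2 P$ recovering $\tau$ and pinning down the distinguished subspace $\C\begin{bmatrix}0\\1\end{bmatrix}$, and the eigenvalue of the adjoint on that subspace recovering $\lambda$; the same identity gives the isometry-versus-non-isometry obstruction that replaces the paper's norm comparison between types. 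What your route buys is conceptual clarity: it explains \emph{why} the invariants are exactly $\theta$ and $\sigma/\sqrt{1+\|g\|^2}$, it modularizes the argument into a reusable reduction lemma plus a two-line classification, and it avoids the paper's longer chain of norm identities. What the paper's route buys is self-containedness at the level of the subspaces themselves (no need to verify the matrix identification carefully) and, as a by-product, the explicit form of the intertwining unitary, which your argument produces only implicitly as a composition. One small point to make airtight in a write-up: the unimodularity $|\vp(e^{i\theta})| = 1$, which your phase absorption requires, is part of the Agler--Stankus canonical representation (the paper likewise takes $\mu_{\vp_j} \in \T$ as given), so you should cite it as such rather than leave ``one checks'' hanging.
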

\begin{proof}
Throughout the proof, we consider $\mu_{\vp_j} \in \mathbb{T}$ as
\[
\mu_{\vp_j} = \overline{\vp_j(e^{i\theta_j})},
\]
for $j=1, 2$. Let us start with the proof of the ``if'' part. Provided that condition (1) holds, we assume $\clm_1=\varphi H^2\oplus\{0\}$ and $\clm_2=\psi H^2\oplus\{0\}$ for some inner functions $\varphi$ and $\psi$ from $H^\infty$, and consider the unitary operator $U:\clm_1\to \clm_2$, defined by
\[
U\begin{bmatrix} \varphi h\\ 0\end{bmatrix}=\begin{bmatrix} \psi h\\ 0\end{bmatrix}
\]
for all $h\in H^2$. Then
\begin{align*}
B_{\sigma_2, e^{i\theta_2}}U\begin{bmatrix} \varphi h\\ 0\end{bmatrix}&=\begin{bmatrix} S & \sigma_2(1\otimes 1)\\ 0 & e^{i\theta_2}\end{bmatrix}\begin{bmatrix} \psi h\\ 0\end{bmatrix}
= \begin{bmatrix} z\psi h\\ 0\end{bmatrix}
= U\begin{bmatrix} z\varphi h\\ 0\end{bmatrix}
= UB_{\sigma_1, e^{i\theta_1}}\begin{bmatrix} \varphi h\\ 0\end{bmatrix},
\end{align*}
which implies $B_{\sigma_1, e^{i\theta_1}}{\big|_{\clm_1}} \cong B_{\sigma_2, e^{i\theta_2}}{\big|_{\clm_2}}$. This part is the same as the equivalence of invariant subspaces of $S$ on $H^2$. Next, we assume that (2) is true. Set
\[
\theta=\theta_1=\theta_2.
\]
Define a linear operator $U:\clm_1\to \clm_2$ by
\[
U\left(c_1\mu_{\varphi_1}\begin{bmatrix}\varphi_1 h \\ 0\end{bmatrix} + c_2\sigma_2 \begin{bmatrix} g_1\\ 1 \end{bmatrix}\right)
= c_1\mu_{\varphi_2}\begin{bmatrix}\varphi_2 h \\ 0\end{bmatrix} + c_2 \sigma_1 \begin{bmatrix} g_2\\ 1\end{bmatrix},
\]
for all $h\in H^2$ and scalars $c_1, c_2\in\mathbb{C}$. It is evident from the construction that $U$ is surjective. Moreover, as $\sigma_2^2(1+\|g_1\|^2)=\sigma_1^2(1+\|g_2\|^2)$, we have
\begin{align*}
\left\|c_1\mu_{\varphi_2}\begin{bmatrix}\varphi_2 h \\ 0\end{bmatrix} + c_2 \sigma_1\begin{bmatrix} g_2\\ 1\end{bmatrix}\right\|^2
&=|c_1|^2\left\|\begin{bmatrix}\varphi_2 h \\ 0\end{bmatrix}\right\|^2+\sigma_1^2|c_2|^2\left\|\begin{bmatrix}g_2 \\ 1\end{bmatrix}\right\|^2
\\
&=|c_1|^2\|h\|^2+\sigma_1^2|c_2|^2(\|g_2\|^2+1)
\\
&=|c_1|^2\|h\|^2+\sigma_2^2|c_2|^2(\|g_1\|^2+1)
\\
&=\left\|c_1\mu_{\varphi_1}\begin{bmatrix}\varphi_1 h \\ 0\end{bmatrix}+c_2\sigma_2\begin{bmatrix} g_1\\ 1\end{bmatrix}\right\|^2.
\end{align*}
This implies that $U$ is an isometry, and therefore, is a unitary operator as well. For notational simplicity, set
\[
F = c_1\mu_{\varphi_1}\begin{bmatrix}\varphi_1 h \\ 0\end{bmatrix} + c_2 \sigma_2 \begin{bmatrix} g_1\\ 1\end{bmatrix}.
\]
We observe that
\begin{align*}
B_{\sigma_2, e^{i\theta}}U F
&=\begin{bmatrix} S & \sigma_2(1\otimes 1)\\ 0 & e^{i\theta}\end{bmatrix}\left(c_1\mu_{\varphi_2}\begin{bmatrix}\varphi_2 h \\ 0\end{bmatrix}+c_2\sigma_1\begin{bmatrix} g_2\\ 1\end{bmatrix}\right)
\\
&=c_1\mu_{\varphi_2}\begin{bmatrix}z\varphi_2 h \\ 0\end{bmatrix}+c_2\sigma_1\begin{bmatrix} zg_2+\sigma_2\\ e^{i\theta}\end{bmatrix}.
\end{align*}
At this point, we recall the representation of $g_j$:
\[
g_j(z) = \sigma_j \left(\frac{\mu_{\vp_j} \varphi_j - 1}{z-e^{i\theta}}\right),
\]
where $\mu_{\vp_j} = \overline{\vp_j(e^{i\theta})}$ for $j =1,2$. This yields
\[
z g_j + \sigma_j = e^{i\theta} g_j + \sigma_j \mu_{\varphi_j}\varphi_j,
\]
for $j=1,2$. Therefore, we have
\begin{align*}
B_{\sigma_2, e^{i\theta}}U F & = c_1\mu_{\varphi_2}\begin{bmatrix}z\varphi_2 h \\ 0\end{bmatrix}+c_2\sigma_1\left(e^{i\theta}\begin{bmatrix} g_2\\ 1\end{bmatrix}+\begin{bmatrix}\sigma_2 \mu_{\varphi_2}\varphi_2\\ 0\end{bmatrix}\right)
\\
&=U\left(c_1\mu_{\varphi_1}\begin{bmatrix}z\varphi_1 h \\ 0\end{bmatrix}+c_2e^{i\theta}\sigma_2\begin{bmatrix} g_1\\ 1\end{bmatrix}+c_2\sigma_1\begin{bmatrix}\sigma_2 \mu_{\varphi_1}\varphi_1\\ 0\end{bmatrix}\right)
\\
&=U\left(c_1\mu_{\varphi_1}\begin{bmatrix}z\varphi_1 h \\ 0\end{bmatrix}+c_2\sigma_2\begin{bmatrix} zg_1+\sigma_1\\ e^{i\theta}\end{bmatrix}\right)\\
&=UB_{\sigma_1, e^{i\theta}}\left(c_1\mu_{\varphi_1}\begin{bmatrix}\varphi_1 h \\ 0\end{bmatrix}+c_2\sigma_2\begin{bmatrix} g_1\\ 1\end{bmatrix}\right),
\end{align*}
that is, $B_{\sigma_2, e^{i\theta}}U F = UB_{\sigma_1, e^{i\theta}} F$, which again ensures that $B_{\sigma_1, e^{i\theta_1}}{\big|_{\clm_1}} \cong B_{\sigma_2, e^{i\theta_2}}{\big|_{\clm_2}}$. Conversely, let us start by showing that if we assume, without loss of generality, that $\clm_1=\varphi H^2\oplus\{0\}$ is Type I and $\clm_2= \mathbb{C}\begin{bmatrix} g \\ 1\end{bmatrix} \oplus (\psi H^2\oplus\{0\})$ is Type II, for certain inner functions $\varphi$ and $\psi$, then $B_{\sigma_1, e^{i\theta_1}}{\big|_{\clm_1}}$ and $B_{\sigma_2, e^{i\theta_2}}{\big|_{\clm_2}}$ are not unitarily equivalent. Indeed, if it is not true, then, in particular, we will have the norm identity
\[
\left\|B_{\sigma_1, e^{i\theta_1}}{\big|_{\clm_1}}\right\|= \left\|B_{\sigma_2, e^{i\theta_2}}{\big|_{\clm_2}}\right\|.
\]
However, for any $h\in H^2$, we have
\begin{align*}
\left\|B_{\sigma_1, e^{i\theta_1}}\begin{bmatrix} \varphi h\\ 0\end{bmatrix}\right\| =\left\|\begin{bmatrix} z\varphi h\\ 0\end{bmatrix}\right\|
=\left\|\begin{bmatrix} \varphi h\\ 0\end{bmatrix}\right\|,
\end{align*}
that is, $\left\|B_{\sigma_1, e^{i\theta_1}}{\big|_{\clm_1}}\right\|=1$. On the other hand, we have
\begin{align*}
\left\|B_{\sigma_2, e^{i\theta_2}}\begin{bmatrix} g\\ 1\end{bmatrix}\right\|^2 =\left\|\begin{bmatrix} zg+\sigma_2\\ e^{i\theta_2}\end{bmatrix}\right\|^2
= 1+\|g\|^2+\sigma_2^2
= \left\|\begin{bmatrix} g\\
1\end{bmatrix}\right\|^2+\sigma_2^2
& >\left\|\begin{bmatrix} g\\
1\end{bmatrix}\right\|^2,
\end{align*}
and hence
\[
\left\|B_{\sigma_2, e^{i\theta_2}}{\big|_{\clm_2}}\right\|>1,
\]
which leads to a contradiction. Therefore, $\clm_1$ and $\clm_2$ must be of same type. It is therefore enough to assume that both $\clm_1$ and $\clm_2$ are of Type II. Consider the canonical representations of $\clm_j$ as
\[
\clm_j = \mathbb{C}\begin{bmatrix}g_j\\ 1\end{bmatrix} \oplus \left(\varphi_j H^2\oplus\{0\}\right),
\]
where $\vp_j \in H^\infty$ is an inner function, and $g_j = \sigma_j\left(\frac{\overline{\vp_j(e^{i\theta_j})} \vp_j - 1}{z - e^{i\theta_j}}\right)$ for $j=1,2$. Also suppose $U: \clm_1\to \clm_2$ is the unitary operator satisfying the intertwining relation
\[
U B_{\sigma_1, e^{i\theta_1}}{\big|_{\clm_1}} = B_{\sigma_2, e^{i\theta_2}}{\big|_{\clm_2}} U.
\]
Let
\[
U \begin{bmatrix} g_1 \\ 1\end{bmatrix} = \begin{bmatrix} \varphi_2 h_2 \\ 0\end{bmatrix}+\alpha\begin{bmatrix} g_2 \\ 1\end{bmatrix},
\]
for some $h_2\in H^2$ and $\alpha\in\mathbb{C}$. As $\left\|U\begin{bmatrix} g_1 \\ 1\end{bmatrix}\right\|^2 = \|g_1\|^2+1$ and $\|\varphi_2 h_2\| = \|h_2\|$, it follows that
\[
\|g_1\|^2+1 = \left\|\begin{bmatrix} \varphi_2 h_2 \\ 0\end{bmatrix}\right\|^2+|\alpha|^2\left\|\begin{bmatrix} g_2 \\ 1\end{bmatrix}\right\|^2 =\|h_2\|^2+|\alpha|^2\left(1 + \|g_2\|^2\right),
\]
that is,
\begin{equation}\label{Brown_5}
(1 + \|g_1\|^2) - |\alpha|^2 (1 + \|g_2\|^2) = \|h_2\|^2.
\end{equation}
Also, since
\[
B_{\sigma_1, e^{i\theta_1}} \begin{bmatrix} g_1 \\ 1\end{bmatrix} = \begin{bmatrix} zg_1+\sigma_1\\ e^{i\theta_1}\end{bmatrix},
\]
and $\|zg_1+\sigma_1\|^2 = \|g_1\|^2 + \sigma_1^2$ (as $\sigma_1 \in \mathbb{C}$, $zg_1 \in z H^2$, and $\mathbb{C} \perp z H^2$), we have
\[
\left\|UB_{\sigma_1, e^{i\theta_1}}\begin{bmatrix}g_1 \\ 1\end{bmatrix}\right\|^2 = \left\|\begin{bmatrix} zg_1+\sigma_1\\ e^{i\theta_1}\end{bmatrix}\right\|^2 = \|g_1\|^2 + \sigma_1^2 + 1.
\]
Then, $U B_{\sigma_1, e^{i\theta_1}}{\big|_{\clm_1}} = B_{\sigma_2, e^{i\theta_2}}{\big|_{\clm_2}} U$ implies
\[
\begin{aligned}
\|g_1\|^2+\sigma_1^2+1 & = \left\|B_{\sigma_2, e^{i\theta_2}}U \begin{bmatrix}g_1 \\ 1\end{bmatrix}\right\|^2
\\
& = \left\|B_{\sigma_2, e^{i\theta_2}} \left(\begin{bmatrix} \varphi_2 h_2 \\ 0\end{bmatrix}+\alpha\begin{bmatrix} g_2 \\ 1\end{bmatrix} \right) \right\|^2
\\
&=\left\|\begin{bmatrix} z\varphi_2 h_2 \\ 0\end{bmatrix}\right\|^2+|\alpha|^2\left\|\begin{bmatrix} zg_2+\sigma_2 \\ e^{i\theta_2}\end{bmatrix}\right\|^2
\\
&=\|h_2\|^2+|\alpha|^2\left(\|g_2\|^2+\sigma_2^2+1\right).
\end{aligned}
\]
As we already know from \eqref{Brown_5} that $1 + \|g_1\|^2 = \|h_2\|^2+|\alpha|^2\left(1 + \|g_2\|^2 \right)$, we conclude
\begin{equation}\label{Brown_7}
\sigma_1^2=|\alpha|^2\sigma_2^2.
\end{equation}
As $\begin{bmatrix} \varphi_2 h_2\\ 0\end{bmatrix} \in \clm_2$, there exist $h_1\in H^2$ and scalar $\beta$ such that
\[
U^*\begin{bmatrix} \varphi_2 h_2\\ 0\end{bmatrix} = \begin{bmatrix} \varphi_1 h_1 \\ 0\end{bmatrix}+\beta\begin{bmatrix} g_1\\ 1\end{bmatrix}.
\]
Since
\[
\|h_2\|^2 = \| \vp_2 h_2\|^2 = \left\|U^*\begin{bmatrix} \varphi_2 h_2 \\ 0\end{bmatrix}\right\|^2,
\]
we conclude that
\[
\|h_2\|^2 = \left\|\begin{bmatrix} \varphi_1 h_1 \\ 0\end{bmatrix}\right\|^2+|\beta|^2\left\|\begin{bmatrix} g_1 \\ 1\end{bmatrix}\right\|^2
=\|h_1\|^2+|\beta|^2\left(1 + \|g_1\|^2\right).
\]
On the other hand,
\[
\|h_2\|^2 = \|z\varphi_2 h_2\|^2 = \left\|\begin{bmatrix} z\varphi_2 h_2\\ 0\end{bmatrix}\right\|^2 = \left\|B_{\sigma_2, e^{i\theta_2}} \begin{bmatrix}\varphi_2 h_2 \\ 0\end{bmatrix}\right\|^2 = \left\|B_{\sigma_2, e^{i\theta_2}}UU^*\begin{bmatrix}\varphi_2 h_2 \\ 0\end{bmatrix}\right\|^2.
\]
Then by the intertwining relation $U B_{\sigma_1, e^{i\theta_1}}{\big|_{\clm_1}} = B_{\sigma_2, e^{i\theta_2}}{\big|_{\clm_2}} U$ and the representation of $U^*\begin{bmatrix} \varphi_2 h_2\\ 0\end{bmatrix}$ above, we have
\[
\begin{aligned}
\|h_2\|^2 & = \left\|U B_{\sigma_1, e^{i\theta_1}}\left(\begin{bmatrix} \varphi_1 h_1 \\ 0\end{bmatrix}+\beta\begin{bmatrix} g_1\\ 1\end{bmatrix}\right)\right\|^2
\\
& = \left\|B_{\sigma_1, e^{i\theta_1}} \begin{bmatrix} \varphi_1 h_1 \\ 0\end{bmatrix} + B_{\sigma_1, e^{i\theta_1}} \beta\begin{bmatrix} g_1\\ 1\end{bmatrix}\right\|^2
\\
& = \left\|\begin{bmatrix} z\varphi_1 h_1 \\ 0\end{bmatrix}\right\|^2+|\beta|^2\left\|\begin{bmatrix} zg_1+\sigma_1 \\ e^{i\theta_1}\end{bmatrix}\right\|^2
\\
& = \|h_1\|^2+|\beta|^2\left(\|g_1\|^2+\sigma_1^2+1\right).
\end{aligned}
\]
Combining this with $\|h_2\|^2 = \|h_1\|^2+|\beta|^2\left(1 + \|g_1\|^2\right)$, we obtain
\[
\beta=0,
\]
and hence, the action of $U^*$ on the vector $\begin{bmatrix} \varphi_2 h_2\\ 0\end{bmatrix}$ reduces to $U^*\begin{bmatrix} \varphi_2 h_2\\ 0\end{bmatrix} = \begin{bmatrix} \varphi_1 h_1 \\ 0\end{bmatrix}$, and consequently
\[
U \begin{bmatrix} \varphi_1 h_1 \\ 0\end{bmatrix} = \begin{bmatrix} \varphi_2 h_2\\ 0\end{bmatrix}.
\]
As a result, the identity $U\begin{bmatrix} g_1 \\ 1\end{bmatrix} = \begin{bmatrix} \varphi_2 h_2 \\ 0\end{bmatrix} + \alpha\begin{bmatrix} g_2 \\ 1\end{bmatrix}$ implies
\[
U\begin{bmatrix} g_1 \\ 1\end{bmatrix}=U\begin{bmatrix} \varphi_1 h_1 \\ 0\end{bmatrix}+\alpha\begin{bmatrix} g_2 \\ 1\end{bmatrix},
\]
that is,
\[
U\begin{bmatrix} g_1 - \varphi_1 h_1 \\ 1\end{bmatrix} = \alpha\begin{bmatrix} g_2 \\ 1\end{bmatrix}.
\]
Recall that $g_1 \in \clk_{\vp_1}$ (the model space corresponding to the inner function $\vp_1$ in $H^\infty$). Then $\vp_1 h_1 \in \clk_{\vp_1}^\perp = \vp_1 H^2$ yields
\[
\|g_1 - \vp_1 h_1\|^2 = \|g_1\|^2 + \|\vp_1 h_1\|^2 = \|g_1\|^2 + \|h_1\|^2,
\]
and so
\[
1+\|g_1\|^2+\|h_1\|^2 =\left\|U\begin{bmatrix} g_1-\varphi_1h_1\\ 1\end{bmatrix}\right\|^2 = |\alpha|^2\left\|\begin{bmatrix} g_2 \\ 1\end{bmatrix}\right\|^2,
\]
which implies
\[
1+\|g_1\|^2+\|h_1\|^2 = |\alpha|^2(1+\|g_2\|^2).
\]
But, by \eqref{Brown_5}, we know that $|\alpha|^2\left(\|g_2\|^2+1\right) = \|g_1\|^2-\|h_2\|^2 + 1$. Therefore, $\|h_1\|^2 + \|h_2\|^2=0$, yielding that
\[
h_1 = h_2 = 0.
\]
The identity \eqref{Brown_5} then changes to $(1 + \|g_1\|^2) = |\alpha|^2 (1 + \|g_2\|^2)$, and then, \eqref{Brown_7} yields the desired identity
\[
\sigma_2^2(1+\|g_1\|^2)=\sigma_1^2(1+\|g_2\|^2).
\]
It remains to show that $\theta_1 = \theta_2$. First, we use the intertwining property $U B_{\sigma_1, e^{i\theta_1}}{\big|_{\clm_1}} = B_{\sigma_2, e^{i\theta_2}}{\big|_{\clm_2}} U$ to observe that
\[
\left\langle B_{\sigma_1, e^{i\theta_1}}\begin{bmatrix} g_1 \\ 1\end{bmatrix}, U^*\begin{bmatrix} g_2 \\ 1\end{bmatrix}\right\rangle= \left\langle B_{\sigma_2, e^{i\theta_2}}U\begin{bmatrix} g_1 \\ 1\end{bmatrix}, \begin{bmatrix} g_2 \\ 1\end{bmatrix}\right\rangle.
\]
As $U\begin{bmatrix} g_1 \\ 1\end{bmatrix}=\alpha\begin{bmatrix} g_2 \\ 1\end{bmatrix}$, we have $U^* \begin{bmatrix} g_2 \\ 1\end{bmatrix} = \frac{1}{\alpha} \begin{bmatrix} g_1 \\ 1\end{bmatrix}$ (we already know that $\alpha \neq 0$), and consequently, the above identity implies
\[
\frac{1}{\overline{\alpha}}\left\langle \begin{bmatrix} zg_1+\sigma_1\\ e^{i\theta_1}\end{bmatrix}, \begin{bmatrix} g_1 \\ 1\end{bmatrix}\right\rangle=\alpha\left\langle \begin{bmatrix} zg_2+\sigma_2\\ e^{i\theta_2}\end{bmatrix}, \begin{bmatrix} g_2 \\ 1\end{bmatrix}\right\rangle.
\]
Again, recall that
\[
z g_j + \sigma_j = e^{i\theta_j} g_j + \sigma_j \mu_{\varphi_j}\varphi_j,
\]
for $j=1,2$. Then
\[
\left\langle e^{i\theta_1}\begin{bmatrix} g_1\\ 1\end{bmatrix}+\begin{bmatrix}\sigma_1\mu_{\varphi_1}\varphi_1 \\ 0\end{bmatrix}, \begin{bmatrix} g_1\\ 1\end{bmatrix}\right\rangle = |\alpha|^2 \left\langle  e^{i\theta_2}\begin{bmatrix} g_2\\ 1\end{bmatrix}+\begin{bmatrix}\sigma_2\mu_{\varphi_2}\varphi_2 \\ 0\end{bmatrix}, \begin{bmatrix} g_2\\ 1\end{bmatrix} \right\rangle,
\]
and so
\[
e^{i\theta_1} \left\|\begin{bmatrix} g_1\\ 1\end{bmatrix}\right\|^2 = e^{i\theta_2} |\alpha|^2 \left\|\begin{bmatrix} g_2\\ 1\end{bmatrix}\right\|^2,
\]
that is,
\[
e^{i\theta_1}\left(1+\|g_1\|^2\right) = e^{i\theta_2}|\alpha|^2(1+\|g_2\|^2).
\]
As we already know that $\left(1+\|g_1\|^2\right) = |\alpha|^2(1+\|g_2\|^2)$, it follows that $e^{i\theta_1}=e^{i\theta_2}$. Since $\theta_1, \theta_2\in[0, 2\pi)$, we finally conclude that $\theta_1=\theta_2$.
\end{proof}

Therefore, in contrast to the shift operators on the Hardy space, the invariant subspaces of the Brownian shifts have the potential to lead to different operators. In the final section of this paper, we illustrate these results with concrete examples.

Finally, note that by setting $\vp_2=1$, $g_2=0$ in the above theorem, we get that for any Type II invariant subspace $\clm_1$ of $B_{\sigma_1, e^{i\theta_1}}$,
$$
B_{\sigma_1, e^{i\theta_1}}{\big|_{\clm_1}} \cong B_{\frac{\sigma_1}{\sqrt{1+\|g_1\|^2}}, e^{i\theta_1}}.
$$
This was previously observed by Agler and Stankus in \cite[Proposition 5.75]{Agler-Stankus}.

\section{$\frac{1}{\sqrt{1+\sigma^2}}\br \in C_{00}$}\label{sec: power bdd}

The aim of this section is to prove that Brownian shifts, when scaled by their reciprocal norms, belong to $C_{00}$. Scaling an operator by its reciprocal norm does turn it into a contraction, but does not necessarily place it in the class $C_{00}$ (nor even $C_{\cdot 0}$). Simply consider a unitary operator or the shift. This is where Brownian shifts exhibit different behavior.

We first prove that Brownian shifts are not even similar to contractions, and we establish this by showing that they are not power-bounded. Recall that a bounded linear operator $A$ acting on a Hilbert space $\clh$ is said to be \textit{power bounded} if the sequence of real numbers
\[
\{\|A^n\|\}_{n=1}^\infty,
\]
is bounded. It is evident that any bounded linear operator similar to a contraction is power-bounded; however, the converse is far from being true. Let us fix a Brownian shift $\br$. Observe that
\[
\br\begin{bmatrix}
    0 \\ 1
\end{bmatrix} =\begin{bmatrix}
    S & \sigma(1\otimes 1)\\
    0 & e^{i\theta}
\end{bmatrix}\begin{bmatrix}
    0 \\ 1
\end{bmatrix}
=\begin{bmatrix}
\sigma \\ e^{i\theta}
\end{bmatrix}.
\]
In general, by the principle of mathematical induction, we conclude that
\begin{equation}\label{Similar_eq1}
B_{\sigma, e^{i\theta}}^m\begin{bmatrix}
0 \\ 1
\end{bmatrix}
=\begin{bmatrix}
\sigma\sum_{k=0}^{m-1}e^{ik\theta}z^{m-k-1} \\ e^{im\theta}
\end{bmatrix},
\end{equation}
for all $m \geq 1$. Using the norm of functions in $H^2$, we conclude that
\[
\left\|\begin{bmatrix}
\sigma\sum_{k=0}^{m-1}e^{ik\theta}z^{m-k-1} \\ e^{im\theta}
\end{bmatrix}\right\|^2 = 1+m\sigma^2.
\]
As $\left\|\begin{bmatrix} 0 \\ 1\end{bmatrix} \right\| = 1$, it follows that
\[
\left\|B_{\sigma, e^{i\theta}}^m\right\|^2\geq\left\|B_{\sigma, e^{i\theta}}^m\begin{bmatrix}
    0 \\ 1
\end{bmatrix}\right\|^2=1+m\sigma^2 \raro \infty,
\]
as $m\to\infty$. This proves the following:

\begin{proposition}\label{Brown_similar}
$\br$ on $H^2\oplus\mathbb{C}$ is not power bounded.
\end{proposition}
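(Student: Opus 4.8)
The plan is to disprove power boundedness directly, by exhibiting a single unit vector whose iterates under $\br$ grow without bound. Since $\|\br^m\| \geq \|\br^m x\|/\|x\|$ for any nonzero $x$, it suffices to produce one such $x$ with $\|\br^m x\| \raro \infty$. The block upper-triangular form of $\br$ makes the vector $\begin{bmatrix} 0 \\ 1 \end{bmatrix}$ the obvious candidate: the diagonal entry $S$ is an isometry and contributes no growth on its own, but the off-diagonal coupling $\sigma(1\otimes 1)$ repeatedly injects a constant into the $H^2$-component, and these injections land in mutually orthogonal directions, so their contributions accumulate in norm rather than cancel.

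Concretely, I would first compute $\br^m \begin{bmatrix} 0 \\ 1 \end{bmatrix}$ by induction on $m$. Using the action $\br \begin{bmatrix} f \\ c \end{bmatrix} = \begin{bmatrix} zf + \sigma c \\ e^{i\theta} c \end{bmatrix}$, one checks the base case $m=1$ and then, assuming the first component at step $m$ is $\sigma \sum_{k=0}^{m-1} e^{ik\theta} z^{m-k-1}$ with second component $e^{im\theta}$, applies $\br$ once more: this shifts every existing term up by one power of $z$ and appends a fresh constant term $\sigma e^{im\theta}$, producing exactly the analogous expression at step $m+1$. The outcome is a closed form in which the $H^2$-component is a polynomial whose coefficients, as $k$ runs from $0$ to $m-1$, attach to the distinct monomials $z^{m-1}, \ldots, z^{0}$, each with modulus $\sigma$.

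The decisive step is the norm computation. Because $\{z^j\}_{j \geq 0}$ is orthonormal in $H^2$, the squared $H^2$-norm of the first component is simply the sum of the squared coefficient moduli, namely $m\sigma^2$, while the scalar second component $e^{im\theta}$ has modulus one. Hence $\left\|\br^m \begin{bmatrix} 0 \\ 1 \end{bmatrix}\right\|^2 = 1 + m\sigma^2$, and since $\begin{bmatrix} 0 \\ 1 \end{bmatrix}$ is a unit vector, $\|\br^m\|^2 \geq 1 + m\sigma^2 \raro \infty$ as $m \raro \infty$. Thus $\{\|\br^m\|\}_{m=1}^\infty$ is unbounded and $\br$ is not power bounded.

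There is no serious obstacle here; the argument is essentially a guided computation. The only point requiring care is the bookkeeping in the induction, namely correctly tracking how the $e^{ik\theta}$ factors pair with the $z$-powers so that the successive off-diagonal contributions occupy distinct, hence orthogonal, monomials. Once the orthonormality of $\{z^j\}$ is invoked, the linear growth of the squared norm is immediate. I would also record the consequence that $\br$ cannot be similar to any contraction, since similarity to a contraction forces power boundedness.
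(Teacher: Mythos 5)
Your proposal is correct and follows essentially the same route as the paper: both apply $\br$ iteratively to the unit vector $\begin{bmatrix} 0 \\ 1 \end{bmatrix}$, derive by induction the closed form $\br^m \begin{bmatrix} 0 \\ 1 \end{bmatrix} = \begin{bmatrix} \sigma\sum_{k=0}^{m-1}e^{ik\theta}z^{m-k-1} \\ e^{im\theta} \end{bmatrix}$, and use orthonormality of the monomials to get $\|\br^m\|^2 \geq 1+m\sigma^2 \raro \infty$. The concluding remark that $\br$ is therefore not similar to a contraction also matches the paper's observation.
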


This in particular shows that $\br$ is not similar to contractions. However, the following is true:

\begin{theorem}\label{Brown_C00}
$\frac{1}{\|\br\|}\br \in C_{00}$ for all covariance $\sigma > 0$ and angle $\theta \in [0, 2\pi)$.
\end{theorem}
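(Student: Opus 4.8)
The plan is to put $r = 1/\|\br\|$ and verify the two defining conditions of $C_{00}$ directly: that $\mathrm{SOT}\text{-}\lim_{m\to\infty}(r\br)^m = 0$ (which says $T^*$ is pure) and $\mathrm{SOT}\text{-}\lim_{m\to\infty}(r\br)^{*m} = 0$ (which says $T$ is pure). The preliminary step is to identify the norm. A one-line computation gives $\big\|\br\begin{bmatrix} f\\ c\end{bmatrix}\big\|^2 = \|f\|^2 + (1+\sigma^2)|c|^2$, using $zf \in zH^2 \perp \mathbb{C}$, so that $\|\br\| = \sqrt{1+\sigma^2}$ and hence $r = (1+\sigma^2)^{-1/2}\in(0,1)$. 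This strictly-less-than-one factor is the entire engine of the argument, in sharp contrast with Proposition \ref{Brown_similar}, where the same growth estimates defeat power boundedness.

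For the direction $(r\br)^m \to 0$ strongly, I would extend the power formula \eqref{Similar_eq1} to an arbitrary vector, obtaining
\[
\br^m \begin{bmatrix} f\\c\end{bmatrix} = \begin{bmatrix} z^m f + \sigma c \sum_{k=0}^{m-1}e^{ik\theta}z^{m-k-1}\\ e^{im\theta}c\end{bmatrix}.
\]
The key observation is that $z^m f \in z^m H^2$ while the polynomial $\sum_{k=0}^{m-1}e^{ik\theta}z^{m-k-1}$ has degree at most $m-1$, so the two summands in the top entry are orthogonal. This makes the norm exactly computable, yielding
\[
\big\|(r\br)^m\begin{bmatrix}f\\c\end{bmatrix}\big\|^2 = r^{2m}\big(\|f\|^2 + |c|^2\big) + \sigma^2 m\, r^{2m}|c|^2.
\]
Since both $r^{2m}\to 0$ and $m\,r^{2m}\to 0$, the left-hand side tends to $0$ for every fixed vector, giving the required strong convergence.

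For the direction $(r\br)^{*m}\to 0$ strongly, I would compute $\brst = \begin{bmatrix}S^* & 0\\ \sigma(1\otimes 1)^* & e^{-i\theta}\end{bmatrix}$, where $(1\otimes 1)^* f = \langle f, 1\rangle$, and then
\[
(\br^m)^*\begin{bmatrix}f\\c\end{bmatrix} = \begin{bmatrix} (S^*)^m f\\[2pt] \sigma\sum_{k=0}^{m-1}e^{-ik\theta}\langle f, z^{m-1-k}\rangle + e^{-im\theta}c\end{bmatrix}.
\]
The top component satisfies $\|(S^*)^m f\|\to 0$ because the backward shift $S^*$ is pure, while the scalar lower component is bounded by Cauchy--Schwarz: $\big|\sigma\sum_{k=0}^{m-1}e^{-ik\theta}\langle f, z^{m-1-k}\rangle\big| \le \sigma\sqrt m\,\|f\|$. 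Multiplying the whole norm-square by $r^{2m}$, the top contribution vanishes and the lower one is $O(m\,r^{2m})\to 0$, so $\big\|(r\br)^{*m}\begin{bmatrix}f\\c\end{bmatrix}\big\|\to 0$, completing the verification that $r\br\in C_{00}$.

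The genuinely delicate point is this second (adjoint) direction: the scalar component of $(\br^m)^*$ does \emph{not} decay on its own — it grows like $\sqrt m$, which is precisely the mechanism behind the failure of power boundedness in Proposition \ref{Brown_similar}. The temptation to argue that each block-entry is individually ``small'' must therefore be resisted; instead one shows via the explicit Cauchy--Schwarz bound that the growth is only polynomial, so that the exponential factor $r^{2m}$ (available exactly because $\|\br\|^{-1}<1$) dominates it. By comparison, the first direction is routine once the orthogonality of $z^m f$ against the low-degree polynomial is exploited.
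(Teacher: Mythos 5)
Your proposal is correct and takes essentially the same route as the paper's proof: both compute $\|\br\|=\sqrt{1+\sigma^2}$, derive explicit formulas for the powers $\br^m$ and $\br^{*m}$, and beat the $O(m)$ polynomial growth with the geometric factor $(1+\sigma^2)^{-m}$. The only difference is presentational — you keep a general vector $\begin{bmatrix} f\\ c\end{bmatrix}$ intact and use the orthogonality of $z^m f$ against low-degree polynomials together with Cauchy--Schwarz for the adjoint direction, whereas the paper expands $u$ in the monomial basis and performs the identical estimates coordinatewise.
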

\begin{proof}
Let us begin by computing the norm of $\br$. For any $\begin{bmatrix}f \\ \alpha\end{bmatrix}\in H^2\oplus\mathbb{C}$ with unit norm $\|f\|^2+|\alpha|^2=1$, we have, in particular, that $|\alpha|\leq 1$. Moreover,
\[
\left\|\br\begin{bmatrix} f \\ \alpha
\end{bmatrix}\right\|^2=\left\|\begin{bmatrix} zf+\sigma\alpha \\ e^{i\theta}\alpha
\end{bmatrix}\right\|^2=\|zf+\sigma\alpha\|^2+|\alpha|^2=1+\sigma^2|\alpha|^2\leq 1+\sigma^2,
\]
and equality occurs for $f=0, \alpha=1$. Therefore, we have the norm of $\br$ as
\[
\|\br\|=\sqrt{1+\sigma^2}.
\]
Consequently, the operator
\[
\tilde{B}:=\frac{1}{\sqrt{1+\sigma^2}}\br,
\]
becomes a contraction on $H^2 \oplus \mathbb{C}$. Pick $u \in H^2 \oplus \mathbb{C}$ and write
\[
u=c_0\begin{bmatrix}
    0 \\ 1
\end{bmatrix}+\sum_{k=0}^\infty c_{k+1}\begin{bmatrix}
    z^k \\ 0
\end{bmatrix}\in H^2\oplus\mathbb{C},
\]
for some $c_k \in \mathbb{C}$ for $k \geq 0$. Making use of (\ref{Similar_eq1}), a little computation reveals, for each $n \geq 1$, that
\begin{align*}
\left\|\tilde{B}^n u\right\|^2&=\frac{1}{(1+\sigma^2)^n}\left\|c_0 B_{\sigma, e^{i\theta}}^n\begin{bmatrix}
    0 \\ 1
\end{bmatrix}+\sum_{k=0}^\infty c_{k+1} B_{\sigma, e^{i\theta}}^n\begin{bmatrix}
    z^k \\ 0
\end{bmatrix}\right\|^2\\
&=\frac{1}{(1+\sigma^2)^n}\left\|c_0\begin{bmatrix}
    \sigma\sum_{k=0}^{n-1}e^{ik\theta}z^{n-k-1} \\ e^{in\theta}
\end{bmatrix}+\sum_{k=0}^\infty c_{k+1}\begin{bmatrix}
    z^{n+k} \\ 0
\end{bmatrix}\right\|^2\\
&= \frac{1}{(1+\sigma^2)^n}\left(|c_0|^2\left\|\begin{bmatrix}
    \sigma\sum_{k=0}^{n-1}e^{ik\theta}z^{n-k-1} \\ e^{in\theta}
\end{bmatrix}\right\|^2+\sum_{k=0}^\infty|c_{k+1}|^2\left\|\begin{bmatrix}
    z^{n+k} \\ 0
\end{bmatrix}\right\|^2\right)\\
&=\frac{1}{(1+\sigma^2)^n}\left(|c_0|^2(1+n\sigma^2)+\sum_{k=0}^\infty|c_{k+1}|^2\right),
\end{align*}
and hence
\[
\left\|\tilde{B}^n u\right\|^2 \leq\frac{\|u\|^2+n|c_0|^2\sigma^2}{(1+\sigma^2)^n} \leq \frac{2}{\sigma^4}\left(\frac{\|u\|^2}{n(n-1)}+\frac{\sigma^2|c_0|^2}{n-1}\right) \longrightarrow 0,
\]
as $n\to\infty$. This implies that $\tilde{B}^* \in C_{.0}$. On the other hand, we have
\[
{\tilde{B}}^{*n}\begin{bmatrix}
    0 \\ 1
\end{bmatrix}
=\frac{1}{(1+\sigma^2)^{\frac{n}{2}}}B_{\sigma, e^{i\theta}}^{*^n}\begin{bmatrix}
    0 \\ 1
\end{bmatrix}=\frac{e^{-in\theta}}{(1+\sigma^2)^{\frac{n}{2}}}\begin{bmatrix}
    0 \\ 1
\end{bmatrix},
\]
for all $n \geq 1$. Moreover, if $0\leq k< n$, then
\[
{\tilde{B}}^{*n} \begin{bmatrix}
    z^k \\ 0
\end{bmatrix} =\frac{1}{(1+\sigma^2)^{\frac{n}{2}}}B_{\sigma, e^{i\theta}}^{*^n}\begin{bmatrix}
    z^k \\ 0
\end{bmatrix}\\ =\frac{1}{(1+\sigma^2)^{\frac{n}{2}}}B_{\sigma, e^{i\theta}}^{*^{n-k}}\begin{bmatrix}
    1 \\ 0
\end{bmatrix}\\ =\frac{\sigma}{(1+\sigma^2)^{\frac{n}{2}}}B_{\sigma, e^{i\theta}}^{*^{n-k-1}}\begin{bmatrix}
    0 \\ 1
\end{bmatrix},
\]
that is,
\[
{\tilde{B}}^{*n}\begin{bmatrix}
    z^k \\ 0
\end{bmatrix} =\frac{\sigma e^{-i(n-k-1)\theta}}{(1+\sigma^2)^{\frac{n}{2}}}\begin{bmatrix}
    0 \\ 1
\end{bmatrix}.
\]
Finally, for $k\geq n$, we have
$$
\tilde{B}^{*n}\begin{bmatrix}
    z^k \\ 0
\end{bmatrix}
=\frac{1}{(1+\sigma^2)^{\frac{n}{2}}}B_{\sigma, e^{i\theta}}^{*^n}\begin{bmatrix}
    z^k \\ 0
\end{bmatrix}
=\frac{1}{(1+\sigma^2)^{\frac{n}{2}}}\begin{bmatrix}
    z^{k-n} \\ 0
\end{bmatrix}.
$$
Therefore, we have
\begin{align*}
\left\|\tilde{B}^{*n} u\right\|^2&=\left\|c_0 \tilde{B}^{*n} \begin{bmatrix}
    0 \\ 1
\end{bmatrix} +\sum_{k=0}^{n-1} c_{k+1} \tilde{B}^{*n}\begin{bmatrix}
    z^k \\ 0
\end{bmatrix}+\sum_{k=n}^\infty c_{k+1} \tilde{B}^{*n}\begin{bmatrix}
    z^k \\ 0
\end{bmatrix}\right\|^2\\
& = \frac{1}{(1+\sigma^2)^n}\left\|\left(c_0 e^{-in\theta}+\sigma \sum_{k=0}^{n-1} c_{k+1}e^{-i(n-k-1)\theta}\right)\begin{bmatrix} 0 \\ 1
\end{bmatrix}+\sum_{k=n}^\infty c_{k+1} \begin{bmatrix}
    z^{k-n} \\ 0
\end{bmatrix}\right\|^2\\
&=\frac{1}{(1+\sigma^2)^n}\left(\left|c_0 e^{-in\theta}+\sigma \sum_{k=0}^{n-1} c_{k+1}e^{-i(n-k-1)\theta}\right|^2+\sum_{k=n}^\infty |c_{k+1}|^2\right)\\
&\leq \frac{1}{(1+\sigma^2)^n}\left((1+n\sigma^2) \left(\sum_{k=0}^n |c_{k}|^2\right) + \sum_{k=n}^\infty |c_{k+1}|^2\right)\\
&\leq \|u\|^2\frac{2+n\sigma^2}{(1+\sigma^2)^n}\\
&\leq\frac{2\|u\|^2}{\sigma^4}\left(\frac{2}{n(n-1)}+\frac{\sigma^2}{n-1}\right).
\end{align*}
But
\[
\frac{2}{n(n-1)}+\frac{\sigma^2}{n-1} \longrightarrow 0,
\]
as $n\to\infty$. As a result, $\tilde{B} \in C_{.0}$, which completes the proof of the theorem.
\end{proof}

As we have proved in the theorem above that $\|\br\|=\sqrt{1+\sigma^2}$, it follows that $\frac{1}{\sqrt{1+\sigma^2}} \br\in C_{00}$. In particular, if $\clm$ is an invariant subspace of $\br$, then the restriction operator
\[
\frac{1}{\sqrt{1+\sigma^2}} P_{\clm^\perp} \br|_{\clm^\perp} \in C_{\cdot 0}.
\]

We remark that similar convergence results have been studied in a broader framework, namely, $B$-operators, in \cite{Chavan et al}. Theorem 2.3 of \cite{Chavan et al}, when specialized to our setting, in particular implies that adjoints of non-normalized Brownian shifts are never in the $C_{\cdot 0}$-class.

\section{Examples}\label{sec: examp}

The purpose of this section is to illustrate the classification result, Theorem \ref{thm: unit equiv}, using concrete examples. We aim to specifically show that Theorem \ref{thm: unit equiv} indeed provides examples of unitarily as well as non-unitarily equivalent invariant subspaces of Brownian shifts. We also prove that Brownian shifts do not have nontrivial reducing subspaces.

We will present two examples, and Blaschke factors will play a role in both of them. For each $\alpha \in \D$, the \textit{Blaschke factor} $b_\alpha$ corresponding to $\alpha$ is defined by
\[
b_\alpha (z) = \frac{z-\alpha}{1-\bar\alpha z},
\]
for all $z \in \D$. Blaschke factors are the simplest examples of inner functions.

\begin{example}
For $\{\alpha_1, \alpha_2\} \subseteq (0,1)$, consider inner functions $\vp_j = b_{\alpha_j}$, $j=1,2$. Also, for each $\theta_1, \theta_2\in [0, 2\pi)$ and $\sigma_1, \sigma_2>0$, define $g_j \in H^2$ by
\[
g_j(z)=\sigma_j\frac{\varphi_j(z)\overline{\varphi_j(e^{i\theta_j})}-1}{z-e^{i\theta_j}},
\]
for $j=1,2$. It is easy to see that
\[
\|g_j\|^2=\sigma_j^2\left\|\frac{1-\alpha_j^2}{(1-\alpha_j e^{i\theta_j})(1-\alpha_j z)}\right\|^2=\sigma_j^2\frac{1-\alpha_j^2}{1+\alpha_j^2-2\alpha_j \cos \theta_j},
\]
for $j=1,2$. In particular, for the choice $\theta_1=\theta_2=0$, we have
\[
\|g_j\|^2=\sigma_j^2\frac{1+\alpha_j}{1-\alpha_j},
\]
and therefore, a little computation reveals that $\sigma_2^2(1+\|g_1\|^2)=\sigma_1^2(1+\|g_2\|^2)$ is satisfied, provided we have
\[
\frac{1}{\sigma_1^2}-\frac{1}{\sigma_2^2}=\frac{2(\alpha_2-\alpha_1)}{(1-\alpha_1)(1-\alpha_2)}.
\]
In particular, in view of Theorem \ref{thm: unit equiv}, if the pairs $\{\alpha_1, \alpha_2\}$ and $\{\sigma_1, \sigma_2\}$ fail to satisfy the above identity, then $B_{\sigma_1, 1}{\big|_{\clm_1}}$ and $B_{\sigma_2, 1}{\big|_{\clm_2}}$ are not unitarily equivalent, where
\[
\clm_j = \mathbb{C} \begin{bmatrix}g_j \\ 1\end{bmatrix} \oplus (\vp_j H^2 \oplus \{0\}),
\]
for $j=1,2$.
\end{example}

In the following example, we bring a singular inner function with a single atom.

\begin{example}
For $\alpha\in (0,1)$, consider the inner function $\vp_1 = b_\alpha$, and the other inner function as
\[
\varphi_2(z)=\exp\left(\frac{z+1}{z-1}\right),
\]
for all $z \in \D$. As usual, for each $\theta_1, \theta_2\in [0, 2\pi)$ and $\sigma_1, \sigma_2>0$, define $g_j: \D \raro \mathbb{C}$ by
\[
g_j(z)=\sigma_j\frac{\varphi_j(z)\overline{\varphi_j(e^{i\theta_j})}-1}{z-e^{i\theta_j}} \qquad (z \in \D),
\]
for $j=1,2$. Clearly, $g_1 \in H^2$. Let us first assume $\theta_1=\theta_2=\pi$. From the calculations of our previous example, we know that
\[
\|g_1\|^2=\sigma_1^2\frac{1-\alpha^2}{1+\alpha^2-2\alpha \cos \pi}=\sigma_1^2\frac{1-\alpha}{1+\alpha}.
\]
Now, we prove that $g_2 \in H^2$. We do so by first proving that $g_2$ is indeed in $L^2(\T)$, where $\T = \partial \D$. First, we note that
\[
g_2 =\sigma_2\frac{\exp\left(\frac{z+1}{z-1}\right)-1}{z+1},
\]
is analytic on $\D$ and its radial limits exist a.e. on $\T$. A straightforward calculation gives
\[
\left|\frac{\exp\left(\frac{e^{i\theta}+1}{e^{i\theta}-1}\right)-1}{e^{i\theta}+1}\right|^2=\frac{1-\cos\left(\frac{\sin \theta}{1-\cos \theta}\right)}{1+\cos \theta} = \frac{\sin ^2\left(\frac{1}{2}\cot \frac{\theta}{2}\right)}{\cos ^2\frac{\theta}{2}}.
\]
We put $x= \frac{1}{2} \cot \frac{\theta}{2}$, and change the variable from $\theta$ to $x$ to see
\[
\int_0^{2\pi}\left|\frac{\exp\left(\frac{e^{i\theta}+1}{e^{i\theta}-1}\right)-1}{e^{i\theta}+1}\right|^2d\theta =\int_0^{2\pi} \frac{\sin ^2\left(\frac{1}{2}\cot \frac{\theta}{2}\right)}{\cos ^2\frac{\theta}{2}} d\theta=\int_{-\infty}^\infty\frac{\sin ^2 x}{x^2}dx=\pi < \infty.
\]
Hence, $g_2 \in L^2(\T)$. Consider the Fourier series expansion of $g_2$ on $\T$ as
\[
g_2 =\sum_{n=-\infty}^{\infty} \alpha_n z^n,
\]
where $\alpha_n$, $n \in \Z$, are the Fourier coefficients. Now using the fact that
\[
(z + 1) g_2 = \sigma_2 (\vp_2-1 )\in H^2,
\]
we have, for any $n\geq 1$, that
\[
\langle (z + 1) g_2, \bar{z}^n\rangle = \langle \sigma_2(\vp_2-1 ), \bar{z}^n \rangle =0,
\]
and consequently
\[
\alpha_{-(n+1)} = - \alpha_{-n}.
\]
In particular, if $\alpha_{-m} \neq 0$, for some $m\geq 1$, then
\[
\alpha_{-n} = (-1)^{n-m} \alpha_{-m},
\]
for all $n\geq m$. This implies that the series $\sum_{n=m}^{\infty}|\alpha_{-n}|^2$ diverges, contradicting the fact that $g_2 \in L^2(\T)$. Therefore, we have
\[
\alpha_{-n} = 0,
\]
for all $n\geq1$, and hence $g_2 \in H^2$. Now, we compute the norm of $g_2$ as
\[
\|g_2\|^2=\sigma_2^2\left\|\frac{\exp\left(\frac{z+1}{z-1}\right)-1}{z+1}\right\|^2 = \frac{\sigma_2^2}{2 \pi} \int_0^{2\pi}\left|\frac{\exp\left(\frac{e^{i\theta}+1}{e^{i\theta}-1}\right)-1}{e^{i\theta}+1}\right|^2d\theta =\frac{\sigma_2^2}{2}.
\]
Hence, the relation $\sigma_2^2(1+\|g_1\|^2)=\sigma_1^2(1+\|g_2\|^2)$ is satisfied under the condition
\[
\frac{1}{\sigma_1^2}-\frac{1}{\sigma_2^2}=\frac{3\alpha-1}{2(1+\alpha)}.
\]
Therefore, in this case also, there is an abundance of examples of invariant subspaces of Brownian shifts, both unitarily equivalent and non-equivalent.
\end{example}

The argument used to prove that $g_2$ belongs to $H^2$ in the above proof is perhaps a standard method. In a more general context, this conclusion follows from \cite[Corollary 4.28]{Fricain}, which is a more involved result.

We will close this paper with the following result, proving that $\br$ is irreducible. Recall that a bounded linear operator $A$ acting on a Hilbert space $\mathcal{H}$ is irreducible if there is no nontrivial closed subspace of $\clh$ that reduces $A$.

\begin{proposition}
$\br$ on $H^2 \oplus \C$ is irreducible for all angle $\theta \in [0, 2\pi)$ and covariance $\sigma > 0$.
\end{proposition}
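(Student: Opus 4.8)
The plan is to show that any orthogonal projection $P$ commuting with $\br$ must be either $0$ or the identity. I would suppose $P$ is a nontrivial orthogonal projection commuting with both $\br$ and $\brst$, so that its range $\clm = P(H^2 \oplus \C)$ reduces $\br$. Since $\clm$ is in particular invariant under $\br$, the Agler–Stankus classification quoted in Section \ref{sec intro} applies: $\clm$ is either Type I, i.e. $\clm = \psi H^2 \oplus \{0\}$ for some inner $\psi$, or Type II, i.e. $\clm = \C \begin{bmatrix} g \\ 1\end{bmatrix} \oplus (\vp H^2 \oplus \{0\})$. The strategy is to also use that $\clm^\perp$ reduces $\br$ (equivalently, $\clm$ is invariant under $\brst$), and to derive a contradiction in each case unless $\clm$ is trivial.

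For the Type I case, I would write out the reducing condition explicitly. If $\clm = \psi H^2 \oplus \{0\}$ reduces $\br$, then $\clm^\perp = \clk_\psi \oplus \C$ (where $\clk_\psi = H^2 \ominus \psi H^2$) must be invariant under $\br$. Applying $\br$ to the vector $\begin{bmatrix} 0 \\ 1\end{bmatrix} \in \clm^\perp$ gives $\begin{bmatrix} \sigma \\ e^{i\theta}\end{bmatrix}$, whose first component is the constant function $\sigma \neq 0$. For this to lie in $\clk_\psi \oplus \C$ we need the constant $\sigma$ to be in $\clk_\psi$, i.e. $\sigma \perp \psi H^2$; but more decisively, invariance of $\clm = \psi H^2 \oplus \{0\}$ under $\brst$ requires $\brst \begin{bmatrix} \psi h \\ 0\end{bmatrix} \in \psi H^2 \oplus \{0\}$, and computing $\brst = \begin{bmatrix} S^* & 0 \\ \sigma(1\otimes 1)^* & e^{-i\theta}\end{bmatrix}$ shows the second component is $\sigma \langle \psi h, 1\rangle$, which must vanish for all $h \in H^2$. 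This forces $\psi(0) = 0$ for all $h$, an impossible constraint unless $\psi H^2 = \{0\}$, i.e. $\clm = \{0\}$ (the case $\psi$ a unimodular constant giving $\clm = H^2 \oplus \{0\}$, which is not reducing since its orthocomplement $\{0\}\oplus\C$ is not $\br$-invariant, must be excluded similarly). I would present this computation carefully, as it is the crux of ruling out Type I reducing subspaces.

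For the Type II case, the orthogonal complement $\clm^\perp$ is a codimension-matching subspace of $H^2 \oplus \C$, and I would again impose invariance under $\brst$ directly on the generating vector $\begin{bmatrix} g \\ 1\end{bmatrix}$, computing $\brst \begin{bmatrix} g \\ 1\end{bmatrix} = \begin{bmatrix} S^* g \\ \sigma\langle g, 1\rangle + e^{-i\theta}\end{bmatrix}$ and demanding it stay in $\clm$. Matching the scalar component and using the explicit form $g(z) = \sigma\big(\tfrac{\overline{\vp(e^{i\theta})}\vp - 1}{z - e^{i\theta}}\big)$ should produce an overdetermined system that has no solution with $\clm$ proper.

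\textbf{The main obstacle} I anticipate is the Type II case: unlike Type I, the complement is not of a clean ``model space plus line'' form, and checking $\brst$-invariance on the one-dimensional summand $\C\begin{bmatrix} g \\ 1\end{bmatrix}$ requires handling the interaction between $S^* g$ and the coefficient $\langle g, 1\rangle$ through the explicit boundary-value formula for $g$. I expect the cleanest route may in fact be to bypass the case split and argue uniformly: show that the only vectors fixed in direction by the scalar dynamics force $P\begin{bmatrix}0\\1\end{bmatrix}$ to be an eigenvector consideration, leveraging that $\br$ acts on the $\C$-summand like multiplication by $e^{i\theta}$ modulo the rank-one coupling $\sigma(1\otimes 1)$; the rank-one perturbation structure from \eqref{eqn: pert} is what breaks reducibility, and pinning down exactly how it obstructs the projection is the delicate step.
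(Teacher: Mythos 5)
Your proposal has two genuine gaps, one in each case of your case split.

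\textbf{Type I case.} Your ``decisive'' step is wrong as stated. The vanishing of the scalar component $\sigma\langle \psi h, 1\rangle = \sigma\,\psi(0)h(0)$ for all $h \in H^2$ forces only the single condition $\psi(0)=0$, which is not at all impossible: $\psi(z)=z$, or any inner function vanishing at the origin, satisfies it. So the scalar component alone does not rule out Type I reducing subspaces, and your claimed contradiction evaporates for every non-constant inner $\psi$ with $\psi(0)=0$. To close this case you must use the \emph{first} component of $\brst\begin{bmatrix}\psi h\\ 0\end{bmatrix}$: the requirement $S^*(\psi h)\in \psi H^2$ for all $h$, together with $S$-invariance of $\psi H^2$, says that $\psi H^2$ reduces $S$; since $S$ is irreducible, this forces $\psi H^2 = H^2$, i.e.\ $\psi$ a unimodular constant, and only then does the scalar component (e.g.\ $\brst\begin{bmatrix}1\\0\end{bmatrix} = \begin{bmatrix}0\\ \sigma\end{bmatrix}\notin H^2\oplus\{0\}$) yield the contradiction. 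The repair is short, but the argument you wrote fails.

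\textbf{Type II case.} Here you have no argument at all: ``should produce an overdetermined system that has no solution'' is a hope, not a proof, and your final paragraph concedes this is the unresolved delicate step. (The case can in fact be closed along your lines: since $g\in\clk_\vp$ and $\clk_\vp$ is $S^*$-invariant, writing $\brst\begin{bmatrix}g\\1\end{bmatrix} = c\begin{bmatrix}g\\1\end{bmatrix} + \begin{bmatrix}\vp h\\0\end{bmatrix}$ forces $S^*g = cg$, so $g$ is an $S^*$-eigenvector, i.e.\ a multiple of a Cauchy kernel, and one then derives a contradiction with the boundary-value formula for $g$ and with $\brst$-invariance on $\vp H^2\oplus\{0\}$ --- but this work is absent from your proposal.) It is worth noting that the paper sidesteps both cases and the Agler--Stankus classification entirely: from any $\begin{bmatrix}f\\ \alpha\end{bmatrix}\in\clm$ with $\alpha\neq 0$, the identity $\brst\br\begin{bmatrix}f\\ \alpha\end{bmatrix} = \begin{bmatrix}f\\ (1+\sigma^2)\alpha\end{bmatrix}$ immediately puts $\begin{bmatrix}0\\1\end{bmatrix}$ in $\clm$, hence $\begin{bmatrix}1\\0\end{bmatrix}$ and all $\begin{bmatrix}z^n\\0\end{bmatrix}$, so $\clm = H^2\oplus\C$; the remaining case $\clm\subseteq H^2\oplus\{0\}$ is dispatched by irreducibility of $S$. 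That elementary route avoids exactly the two places where your proposal breaks down.
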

\begin{proof}
Fix $\theta \in [0, 2\pi)$ and $\sigma > 0$. Let $\clm$ be a nonzero closed subspace of $H^2 \oplus \C$ that reduces $\br$. If possible, assume that $\clm \subseteq H^2 \oplus \{0\}$. There exists a closed subspace $\tilde{\clm} \subseteq H^2$ such that $\clm = \tilde{\clm} \oplus \{0\}$. Then $\tilde{\clm}$ reduces $S$, and the irreducibility of $S$ implies that $\tilde{\clm} = H^2$. However, considering the adjoint of $\br$, it is evident that $\clm = H^2 \oplus \{0\}$ cannot reduce $\br$. Therefore, we must assume that $\clm \nsubseteq H^2 \oplus \{0\}$. There exists $\begin{bmatrix}f \\ \alpha \end{bmatrix} \in \clm$ such that $\alpha \neq 0$. Now, $\clm$ being a reducing subspace of $\br$, we have
\[
\begin{bmatrix}
f\\\sigma^2 \alpha + \alpha
\end{bmatrix} = \brst\br \begin{bmatrix}
f\\ \alpha \end{bmatrix} \in \clm.
\]
Then
\[
\begin{bmatrix} f\\ \sigma^2 \alpha + \alpha
\end{bmatrix} -  \begin{bmatrix}
f\\
\alpha
\end{bmatrix} =  \begin{bmatrix}
0\\ \sigma^2 \alpha
\end{bmatrix} \in \clm,
\]
implies $\begin{bmatrix} 0\\ 1\end{bmatrix}\in \clm$ (as $\sigma^2 \alpha \neq 0$). Since $\br \begin{bmatrix} 0\\ 1\end{bmatrix} =  \begin{bmatrix}\sigma\\e^{i\theta}\end{bmatrix} \in \clm$, it follows that
\[
\begin{bmatrix}\sigma\\ e^{i\theta}\end{bmatrix} - e^{i\theta}\begin{bmatrix} 0\\ 1\end{bmatrix} = \sigma  \begin{bmatrix} 1 \\ 0\end{bmatrix} \in \clm,
\]
and hence, $\begin{bmatrix} 1\\ 0\end{bmatrix} \in \clm$. Therefore, for any $n\geq 0$ we have $$ \br^n\begin{bmatrix}
1\\
0
\end{bmatrix} =\begin{bmatrix}
z^n\\
0
\end{bmatrix}  \in \clm. $$ Consequently, $\clm = H^2 \oplus \C$ and this proves the theorem.
\end{proof}

In closing, we remark that unitary equivalence or nonequivalence of operators arising from natural operators defined on Hilbert spaces is a fundamental and decades-old problem (cf. \cite{Ron, DS}). On one hand, it raises the question of defining new classes of operators from invariant subspaces, and on the other, it analyzes the characteristics of these operators under consideration. For instance, as already pointed out, among known operators, the shift on the Hardy space always yields unitarily equivalent invariant subspaces. At the other extreme, the Bergman shift and the Dirichlet shift never yield unitarily equivalent invariant subspaces \cite{Richter}. We have now enlarged this list by observing that the Brownian shift sometimes yields unitarily equivalent invariant subspaces and sometimes does not. This is particularly intriguing, as we have pointed out in \eqref{eqn: pert} that a Brownian shift is a rank-one perturbation of an isometry.

\vspace{0.2in}

\noindent\textbf{Acknowledgement:} We sincerely thank Professor Jan Stochel for his insightful discussions on the topic of this paper. We also thank the referee for carefully reading the manuscript and for the encouraging remarks. The first named author is supported by a post-doctoral fellowship provided by the National Board for Higher Mathematics (NBHM), India (order No.: 0204/10/(18)/2023/R\&D-II/2791 dated 28 February, 2023). The research of the second named author is supported by a post-doctoral fellowship provided by the National Board for Higher Mathematics (NBHM), India (Order No: 0204/16(8)/2024/R\&D- II/6760, dated May 09, 2024). The research of the third named author is supported in part by TARE (TAR/2022/000063) by SERB, Department of Science \& Technology (DST), Government of India.


\begin{thebibliography}{99}
\bibitem{Agler-Stankus}
J. Agler and M. Stankus, {\em m-isometric transformations of Hilbert spaces, II}, Integr. Equ. Oper. Theory 23, (1995), 1--48.

\bibitem{Beurling}
A. Beurling, {\em On two problems concerning linear transformations in Hilbert space}, Acta Math. 81 (1949), 239--255.


\bibitem{Chavan et al}
S. Chavan, Z. Jab{\l}o\'{n}ski, I. Jung, and J. Stochel, {\em Convergence of power sequences of $B$-operators with applications to stability}, Proc. Amer. Math. Soc. 152 (2024), no. 5, 2035–2050.


\bibitem{Ron}
R. Douglas, {\em Variations on a theme of Beurling}, New York J. Math. 17A (2011), 1–10.

\bibitem{DS}
R. Douglas and J. Sarkar, {\em On unitarily equivalent submodules}, Indiana Univ. Math. J. 57 (2008), 2729–2743.

\bibitem{Fricain}
E. Fricain and J. Mashreghi, {\em The theory of $\clh(b)$ spaces. Vol. 1}, New Mathematical Monographs, 20. Cambridge University Press, Cambridge, 2016.

\bibitem{Gar}
J. Garnett, {\em Bounded analytic functions}, Pure and Applied Mathematics, 96. Academic Press, Inc. [Harcourt Brace Jovanovich, Publishers], New York-London, 1981.

\bibitem{Bill}
W. Helton, {\em Infinite dimensional Jordan operators and Sturm-Liouville conjugate point theory}, Trans. Am. Math. Soc. 170 (1972), 305–331.

\bibitem{Jan}
Z. Jabło\'{n}ski, I. Jung, and J. Stochel, {\em $m$-isometric operators and their local properties}, Linear Algebra Appl. 596 (2020), 49–70.

\bibitem{NaFo70}
B. Sz.-Nagy and C. Foia\c{s}, {\em Harmonic analysis of operators on Hilbert space}, translated from the French and revised North-Holland Publishing Co., Amsterdam-London; American Elsevier Publishing Co., Inc., New York; Aka\'miai Kiad\'o, Budapest 1970 xiii+389 pp.

\bibitem{Richter}
S. Richter, {\em Unitary equivalence of invariant subspaces of Bergman and Dirichlet spaces}, Pac. J. Math. 133 (1988), 151–156.


\end{thebibliography}
\end{document}